\documentclass{amsart}
\usepackage{amssymb, enumitem}
\usepackage[all]{xy}
\usepackage{hyperref, aliascnt}
\usepackage{mathtools}
\usepackage{bbm}
\setcounter{tocdepth}{1}

\def\today{\number\day\space\ifcase\month\or   January\or February\or
   March\or April\or May\or June\or   July\or August\or September\or
   October\or November\or December\fi\   \number\year}

\newaliascnt{thmCt}{lma}
\newtheorem{thm}[thmCt]{Theorem}
\aliascntresetthe{thmCt}

\newaliascnt{corCt}{lma}

\aliascntresetthe{corCt}

\newaliascnt{propCt}{lma}
\newtheorem{prop}[propCt]{Proposition}
\aliascntresetthe{propCt}

\newtheorem*{thm*}{Theorem}
\newtheorem*{cor*}{Corollary}
\newtheorem*{prop*}{Proposition}

\theoremstyle{definition}

\newaliascnt{pgrCt}{lma}

\aliascntresetthe{pgrCt}

\newaliascnt{dfCt}{lma}
\newtheorem{df}[dfCt]{Definition}
\aliascntresetthe{dfCt}

\newaliascnt{remCt}{lma}

\aliascntresetthe{remCt}

\newaliascnt{remsCt}{lma}

\aliascntresetthe{remsCt}

\newaliascnt{egCt}{lma}
\newtheorem{eg}[egCt]{Example}
\aliascntresetthe{egCt}

\newaliascnt{qstCt}{lma}

\aliascntresetthe{qstCt}

\newaliascnt{pbmCt}{lma}

\aliascntresetthe{pbmCt}

\newaliascnt{notaCt}{lma}

\aliascntresetthe{notaCt}

\newcommand{\beq}{\begin{equation}}
\newcommand{\eeq}{\end{equation}}
\newcommand{\beqa}{\begin{eqnarray*}}
\newcommand{\eeqa}{\end{eqnarray*}}
\newcommand{\bal}{\begin{align*}}
\newcommand{\eal}{\end{align*}}
\newcommand{\bi}{\begin{itemize}}
\newcommand{\ei}{\end{itemize}}
\newcommand{\be}{\begin{enumerate}}
\newcommand{\ee}{\end{enumerate}}

\newcommand{\Z}{{\mathbb{Z}}}
\newcommand{\R}{{\mathbb{R}}}
\newcommand{\C}{{\mathbb{C}}}
\newcommand{\N}{{\mathbb{N}}}

\newcommand{\B}{{\mathcal{B}}}

\pagenumbering{arabic}

\newcommand{\ev}{{\mathrm{ev}}}

\newcommand{\QSL}{Q\!S\!L}
\newcommand{\SL}{S\!L}
\newcommand{\QL}{Q\!L}

\DeclareMathOperator{\Rep}{Rep}

% \newcommand{\dirlim}{\displaystyle \lim_{\longrightarrow}}

%%%%%%%%%%%%%%%%%%%%%%%%%%%%%%%%%%%%%%%%%%%%%%%%%%%%%%%%%%%%%%%%%%%%%%%%
\newcommand{\ca}{$C^*$-algebra}

%%%%%%%%%%%%%%%%%%%%%%%%%%%%%%%%%%%%%%%%%%%%%%%%%%%%%%%%%%%%%%%%%%%%%%%%

% Temporary abbreviations:

\newcommand{\I}{\infty}

\title{Functoriality of group algebras acting on $L^p$-spaces}
\date{\today}

\author[Eusebio Gardella]{Eusebio Gardella}
\address{Eusebio Gardella
Department of Mathematics, Deady Hall, University of Oregon
Eugene OR, 97403, USA.}
\email{gardella@uoregon.edu}
\urladdr{http://pages.uoregon.edu/gardella/}
\author{Hannes Thiel}
\address{Hannes Thiel
Mathematisches Institut, Fachbereich Mathematik und Informatik der
Universit\"at M\"unster, Einsteinstrasse 62, 48149 M\"unster, Germany.}
\email{hannes.thiel@uni-muenster.de}
\urladdr{www.math.ku.dk/~thiel/}
\thanks{The first named author was partially supported by the D.~K. Harrison Prize from the
University of Oregon. The second named author was partially supported by the Deutsche
Forschungsgemeinschaft (SFB 878).}
\subjclass[2010]{Primary:
22D20, %Representations of group algebras
43A15, %$L^p$-spaces and other function spaces on groups, semigroups, etc.
%43A07. %Means on groups, semigroups, etc.; amenable groups
Secondary:
43A65, %Representations of groups, semigroups, etc.
46E30. %Spaces of measurable functions ($L^p$-spaces, Orlicz spaces, Köthe function spaces, Lorentz spaces, rearrangement invariant spaces, ideal spaces, etc.)
%47L10.%Algebras of operators on Banach spaces and other topological linear spaces
}
\keywords{Locally compact group, $L^p$-space, Banach algebra of $p$-pseudofunctions}

\begin{document}

\begin{abstract}
We continue our study of group algebras acting on $L^p$-spaces, particularly of algebras of
$p$-pseudofunctions of locally compact groups. We
focus on the functoriality properties of these objects. We show that $p$-pseudofunctions
are functorial with respect to homomorphisms that are either injective, or whose kernel is
amenable and has finite index. We also show that the universal completion of the group algebra
with respect to representations on $L^p$-spaces, is functorial with respect to quotient maps.

As an application, we show that the algebras of $p$- and $q$-pseudofunctions on $\Z$ are isometrically
isomorphic as Banach algebras if and only if $p$ and $q$ are either equal or conjugate.

\end{abstract}
\maketitle
\tableofcontents

%------------------------------------------------------------------------------------------
%==========================================================================================
\section{Introduction}

Associated to a locally compact group, there are several Banach algebras that capture different aspects of its structure and representation theory.
For instance, in \cite{Her73SynthSubgps}, Herz introduced the Banach algebra of $p$-pseudofunctions of
a locally compact group $G$, for a fixed H\"{o}lder exponent $p\in[1,\infty)$. (We are thankful to
Yemon Choi and Matthew Daws for providing this reference.)
This Banach algebra is defined as the completion of the group algebra $L^1(G)$ with respect to the norm induced by the left regular representation $\lambda_p$ of $G$ on $L^p(G)$.
We denote this algebra by $F^p_\lambda(G)$, so that
\[
F^p_\lambda(G)=\overline{\lambda_p(L^1(G))} \subseteq \B(L^p(G)).
\]

In \cite{GarThie14pre:LpGpAlg}, we studied the universal completion of $L^1(G)$ for representations of $G$ on $L^p$-spaces,
which we denote by $F^p(G)$ (this algebra first appeared in \cite{Phi13arX:LpCrProd}, as the crossed
product of $G$ on the $L^p$-operator algebra $\C$).
By universality of $F^p(G)$, the identity map on $L^1(G)$ induces a contractive homomorphism
$\kappa\colon F^p(G)\to F^p_\lambda(G)$ with dense range. One of the main results of \cite{GarThie14pre:LpGpAlg},
obtained independently by Phillips, asserts that $G$ is amenable if and only if $\kappa$ is an (isometric) isomorphism.

For $p=2$, the Banach algebra $F^2(G)$ is the full group \ca{} of $G$, usually denoted $C^*(G)$, and $F^2_\lambda(G)$ is the reduced group \ca{} of $G$, usually denoted $C^*_\lambda(G)$.
The functoriality properties of the full and reduced group \ca{s} are well-understood.
Given a locally compact group $G$, a normal subgroup $N$ of $G$, and a closed subgroup $H$ of $G$,
the following results can be found in \cite{BroOza08Book}:
\be
\item[(a)]
If $G$ is discrete, then the inclusion map $H\to G$ induces natural isometric, unital homomorphism
$C^*_\lambda(H)\to C^*_\lambda(G)$;
\item[(b)]
The quotient map $G\to G/N$ induces a natural quotient homomorphism $C^*(G)\to C^*(G/N)$;
\item[(c)]
If $N$ is amenable, then the quotient map $G\to G/N$ induces a natural homomorphism
$C^*_\lambda(G)\to C^*_\lambda(G/N)$.
\ee

In this paper, we explore the extent to which these results generalize to the case $p\neq 2$.
Many techniques from $C^*$-algebra theory, such as positivity, are no longer available for Banach algebras
acting on $L^p$-spaces. In particular, some standard facts in $C^*$-algebras fail for the classes of Banach
algebras here considered.
For example, a contractive homomorphism with dense range is not necessarily surjective, and
an injective homomorphism need not be isometric.

Our results are as follows (the second one is proved in greater generality than what is reproduced below):

\be
\item
If $H$ is a subgroup of a discrete group $G$, then there is a natural isometric unital map $F^p_\lambda(H)\to F^p_\lambda(G)$ (\autoref{prop: subgroup reduced algs});
\item
If $N$ is a closed normal subgroup of a locally compact group $G$, then there is a natural contractive map
$F^p(G)\to F^p(G/N)$ with dense range (\autoref{prop: InducedQuotientMap});
\item
If $N$ is an amenable normal subgroup of a discrete group $G$, and $G/N$ is finite,
then the natural map $F^p(G)\to F^p(G/N)$ is a quotient map (\autoref{thm:amenableKernel}).
\ee

We point out that the assumption that $G/N$ be finite in (3) above is likely to be unnecessary. On the other hand,
we show in \autoref{eg:NeedAmKer}, using a result of Pooya-Hejazian in \cite{PoyHej14arX:SimpleLp},
that amenability of $N$ is necessary.

In Section 3, we apply our results to study the isomorphism type of the Banach algebras
$F^p_\lambda(\Z)$, with focus on its dependence on the H\"older exponent $p$.
We show that for $p,q\in[1,2]$, there is an isometric isomorphism between $F^p_\lambda(\Z)$ and
$F^q_\lambda(\Z)$ if and only if $p=q$.

Further applications of the results of this paper appear in \cite{GarThi14arX:LpGenInvIsom}.
\newline

Throughout, we will assume that all measure spaces are $\sigma$-finite, and that all Banach spaces
are separable.
Consistently, all locally compact groups will be assumed to be second countable, and will be endowed
with a left Haar measure.

We take $\N=\{1,2,\ldots\}$. For $n$ in $\N$ and $p\in [1,\I]$,
we write $\ell^p_n$ in place of $\ell^p(\{1,\ldots,n\})$, and we write $\ell^p$ in place of $\ell^p(\Z)$.

Let $E$ be a Banach space. We write $\B(E)$ for the Banach algebra of bounded linear operators on $E$.
For $p\in (1,\I)$, we denote by $p'$ its conjugate (H\"older) exponent, which satisfies $\frac{1}{p}+\frac{1}{p'}=1$.
\newline

\textbf{Acknowledgements.} Part of this work was completed while the authors were attending the Thematic Program on Abstract Harmonic Analysis, Banach and Operator Algebras at the Fields Institute in January-June 2014, and while the second named author was visiting the University of Oregon in July and August 2014. The hospitality of the Fields Institute and the University of Oregon are gratefully acknowledged.

The authors would like to thank Chris Phillips and Nico Spronk for helpful conversations,
as well as Antoine Derighetti and Bill Johnson for electronic correspondence.

\section{Functoriality properties}

In this section, we study the extent to which group homomorphisms induce Banach algebra homomorphisms between the respective group operator algebras we studied in \cite{GarThie14pre:LpGpAlg}.
As in the case of group $C^*$-algebras, these completions are not functorial with respect to arbitrary group homomorphisms. Section 3 contains an application of these results, particularly of \autoref{thm:amenableKernel}:
the Banach algebras $F^p_\lambda(\Z)$ and $F^q_\lambda(\Z)$ are isometrically
isomorphic if and only if either $p=q$ or $p=q'$; see \autoref{thm: FpZ not isom}.

We begin by recalling some definitions and results from \cite{GarThie14pre:LpGpAlg}.

\begin{df}\label{df:Classes}
Let $E$ be a (separable) Banach space.
\be
\item
We say that $E$ is an \emph{$L^p$-space} if there exists a $\sigma$-finite measure
space $(X,\mu)$ such that $E$ is isometrically isomorphic to $L^p(X,\mu)$.
We denote by $\mathcal{L}^p$ the class of (separable) $L^p$-spaces.
\item
We say that $E$ is an \emph{$\SL^p$-space} if there exists an $L^p$-space $F$ such that $E$ is isometrically isomorphic to a closed subspace of $F$.
We denote by $\mathcal{SL}^p$ the class of (separable) $\SL^p$-spaces.
\item
We say that $E$ is a \emph{$\QL^p$-space} if there exists an $L^p$-space $F$ such that $E$ is isometrically isomorphic to a quotient of $F$.
We let $\mathcal{QL}^p$ denote the class of (separable) $\QL^p$-spaces.
\item
We say that $E$ is a \emph{$\QSL^p$-space} if there exists an $\SL^p$-space $F$ such that $E$ is isometrically isomorphic to a quotient of $F$.
We let $\mathcal{QSL}^p$ denote the class of (separable) $\QSL^p$-spaces.
\ee

If $\mathcal{E}$ is any of the classes considered above, we denote by $\Rep_\mathcal{E}(G)$ the class
of all contractive representations of $L^1(G)$ on Banach spaces in $\mathcal{E}$. We denote by
$F_\mathcal{E}(G)$ the completion of $L^1(G)$ in the norm given by
\[\|f\|_\mathcal{E}=\sup\left\{\|\pi(f)\|\colon \pi\in \Rep_\mathcal{E}(G)\right\}\]
for $f\in L^1(G)$.

The algebra of $p$-pseudofunctions on $G$, denoted by $F^p_\lambda(G)$, is the completion
of $L^1(G)$ in the norm
\[\|f\|_{F^p_\lambda(G)}=\|\lambda_p(f)\|_{\B(L^p(G))}\]
for $f\in L^1(G)$.
\end{df}

By universality of the objects constructed in \autoref{df:Classes}, there exist canonical maps
making the diagram
\[
\xymatrix{
& { F^p_{\mathrm{S}}(G) } \ar[dr]^{\kappa_{\mathrm{S}}} \\
{ F^p_{\mathrm{QS}}(G) } \ar[ur]^{\kappa_{\mathrm{QS},\mathrm{S}}}
\ar[dr]_{\kappa_{\mathrm{QS},\mathrm{Q}}}
& & { F^p(G) }\ar[r]^\kappa & F^p_\lambda(G) \\
& { F^p_{\mathrm{Q}}(G) } \ar[ur]_{\kappa_{\mathrm{Q}}}
}
\]
commute; see the comments after Remark 2.14 in \cite{GarThie14pre:LpGpAlg}.
These maps have dense range, since they are suitable extensions of the identity map on $L^1(G)$.

The algebra $F^p_\lambda(G)$ of $p$-pseudofunctions, together with the related Banach algebras of
$p$-pseudomeasures $PM_p(G)$ and $p$-convolvers $CV_p(G)$ on $G$, have been studied by a number
of authors; see for example \cite{Her73SynthSubgps}, \cite{Der11ConvOps}, and \cite{NeuRun09ColumnRowQSL}.
Also, the algebra $F^p_{\mathrm{QS}}(G)$ can be seen to be isometrically isomorphic to the algebra
of universal $p$-pseudofunctions $UPF_p(G)$ on $G$, introduced by Runde in \cite{Run05ReprQSL}.

The following is part of Theorem~3.7 in \cite{GarThie14pre:LpGpAlg}.

\begin{thm}
\label{thm:AmenTFAE}
Let $G$ be a locally compact group, and let $p\in(1,\infty)$.
The following are equivalent:
\begin{enumerate}
\item
The group $G$ is amenable.
\item
With $\mathcal{E}$ denoting any of the classes $\mathcal{QSL}^p$, $\mathcal{QL}^p$, $\mathcal{SL}^p$, or $\mathcal{L}^p$, the canonical map $F_\mathcal{E}(G)\to F^p_\lambda(G)$ is an isometric isomorphism.
\end{enumerate}
\end{thm}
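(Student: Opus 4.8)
The plan is to reduce the statement to a single weak-containment inequality and then to show that inequality is equivalent to amenability of $G$. First, since $\mathcal{L}^p\subseteq\mathcal{SL}^p,\mathcal{QL}^p\subseteq\mathcal{QSL}^p$, every $\mathcal{E}$-representation is a $\mathcal{QSL}^p$-representation, so $\|f\|_{\mathcal{L}^p}\le\|f\|_\mathcal{E}\le\|f\|_{\mathcal{QSL}^p}$ for $f\in L^1(G)$ and each of the four classes $\mathcal{E}$; moreover $\lambda_p$ is a contractive representation of $L^1(G)$ on $L^p(G)\in\mathcal{L}^p$, so $\|\lambda_p(f)\|\le\|f\|_{\mathcal{L}^p}$. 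Combining this with the commuting triangle of canonical maps with dense range recalled above (which are contractive, being norm-decreasing), one sees that all four maps $F_\mathcal{E}(G)\to F^p_\lambda(G)$ are isometric isomorphisms if and only if the single map $F_{\mathcal{QSL}^p}(G)\to F^p_\lambda(G)$ is, if and only if $\|\pi(f)\|\le\|\lambda_p(f)\|$ for every $f\in L^1(G)$ and every $\pi\in\Rep_{\mathcal{QSL}^p}(G)$. (Passing to the essential subspace, which remains a $\mathcal{QSL}^p$-space, and using the contractive approximate identity of $L^1(G)$, we may moreover assume $\pi$ nondegenerate.) So it suffices to prove that this last condition is equivalent to amenability of $G$.

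Assume $G$ is amenable and let $\pi\in\Rep_{\mathcal{QSL}^p}(G)$ be nondegenerate. Then $\pi$ is the integrated form of a representation $\tilde\pi\colon G\to\Isom(E)$ of $G$ by isometries of a $\mathcal{QSL}^p$-space $E$: the group representation is automatically isometric, since a nondegenerate contractive representation of $L^1(G)$ extends contractively to $M(G)$, and $\tilde\pi(g),\tilde\pi(g^{-1})$ are then mutually inverse contractions. Form the ``regular $\tilde\pi$-representation'' $V=\lambda_p\otimes\tilde\pi$ of $G$ on $L^p(G)\otimes_p E$, again a $\mathcal{QSL}^p$-space. The $L^p$-version of Fell's absorption principle applies: under the identification $L^p(G)\otimes_p E\cong L^p(G;E)$, the invertible isometry $W$ given by $(W\zeta)(g)=\tilde\pi(g)^{-1}\zeta(g)$ satisfies $WV_gW^{-1}=\lambda_p(g)\otimes 1_E$, so the integrated forms obey $\|V(f)\|=\|\lambda_p(f)\otimes 1_E\|=\|\lambda_p(f)\|$ for $f\in L^1(G)$ — the last equality because, for an operator on $L^p(G)$ tensored with the identity of a subquotient of an $L^p$-space, the norm is unchanged (a Fubini argument, stable under subspaces and quotients). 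It remains to see $\|\pi(f)\|\le\|V(f)\|$, and here amenability enters through Reiter's property $(P_p)$: choose a net $(\xi_i)$ of nonnegative norm-one functions in $L^p(G)$ with $\|\lambda_p(s)\xi_i-\xi_i\|_p\to 0$ uniformly for $s$ in compact subsets of $G$. For $\xi\in E$ one estimates
\[
\bigl\|V(f)(\xi_i\otimes\xi)-\xi_i\otimes\pi(f)\xi\bigr\|_p\;\le\;\|\xi\|\int_G|f(s)|\,\|\lambda_p(s)\xi_i-\xi_i\|_p\,ds,
\]
which tends to $0$ along the net, while $\|\xi_i\otimes\xi\|_p=\|\xi\|$ and $\|\xi_i\otimes\pi(f)\xi\|_p=\|\pi(f)\xi\|$; hence $\|V(f)\|\ge\|\pi(f)\xi\|/\|\xi\|$, and taking the supremum over $\xi\in E$ gives $\|\pi(f)\|\le\|V(f)\|=\|\lambda_p(f)\|$.

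Conversely, suppose the displayed inequality holds for all $\pi\in\Rep_{\mathcal{QSL}^p}(G)$. The trivial representation $f\mapsto\int_G f$ is a contractive representation of $L^1(G)$ on $\C$, which is an $L^p$-space, so $\bigl|\int_G f\bigr|\le\|\lambda_p(f)\|$ for all $f\in L^1(G)$. Applying this to $f\ge 0$ and using $\|\lambda_p(f)\|\le\|f\|_1$ yields $\|\lambda_p(f)\|=\|f\|_1$ for every $0\le f\in L^1(G)$; equivalently, the trivial representation of $G$ is weakly contained in $\lambda_p$. By the $L^p$-analogue of Kesten's amenability criterion — equivalently, because this condition readily implies Reiter's $(P_p)$, which characterizes amenability of $G$ — it follows that $G$ is amenable.

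The delicate part is not the amenability argument, which proceeds as in the classical case $p=2$, but the $L^p$-operator-space infrastructure it rests on: that nondegenerate contractive representations of $L^1(G)$ on $\mathcal{QSL}^p$-spaces integrate isometric representations of $G$ on such spaces; that $L^p(G)\otimes_p E$ is again a $\mathcal{QSL}^p$-space on which $\lambda_p\otimes\tilde\pi$ acts isometrically, with Fell's intertwiner $W$ genuinely isometric and $\|\lambda_p(f)\otimes 1_E\|=\|\lambda_p(f)\|$. These are exactly the points where the positivity available for $C^*$-algebras is absent and must be replaced; they are supplied by \cite{GarThie14pre:LpGpAlg}. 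One should also confirm that property $(P_p)$ and the Kesten-type criterion are available for locally compact $G$, not merely discrete $G$ — which they are, being classical facts about amenable groups.
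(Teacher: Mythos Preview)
The paper does not actually prove this theorem: it is quoted verbatim as ``part of Theorem~3.7 in \cite{GarThie14pre:LpGpAlg}'' and left without argument, so there is no in-text proof to compare against. Your sketch is a plausible reconstruction of what that cited proof presumably contains---the $L^p$-Fell absorption combined with Reiter's property $(P_p)$ for $(1)\Rightarrow(2)$, and the trivial representation together with an $L^p$-Kesten criterion for $(2)\Rightarrow(1)$---and you are right to flag that the genuinely nonobvious points (nondegenerate contractive representations integrate isometric $G$-actions on $\mathcal{QSL}^p$-spaces; $L^p(G)\otimes_p E$ is again $\mathcal{QSL}^p$; $\|\lambda_p(f)\otimes 1_E\|=\|\lambda_p(f)\|$) are exactly those supplied by \cite{GarThie14pre:LpGpAlg}.

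One small comment on your reduction step: the theorem as stated asserts \emph{four} equivalences, one for each class $\mathcal{E}$, not merely that $(1)$ is equivalent to all four maps being isometric simultaneously. Your argument in fact delivers the stronger conclusion, since your $(2)\Rightarrow(1)$ step uses only the trivial representation on $\C\in\mathcal{L}^p$, so already the weakest hypothesis (isometry of $F^p(G)\to F^p_\lambda(G)$) yields amenability; it would be worth saying this explicitly. Also, the implication ``$\|\lambda_p(f)\|=\|f\|_1$ for $f\geq 0$ $\Rightarrow$ $G$ amenable'' is correct but is not quite as immediate as the $p=2$ case; it is the content of the equivalence of Reiter's conditions $(P_1)$ and $(P_p)$ (see, e.g., \cite{Pat88Amen}), and a sentence pointing to that would strengthen the write-up.
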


We now turn to functoriality of these Banach algebras. The case $p=2$ of the following result is proved,
for example, as Proposition~2.5.9 in \cite{BroOza08Book}.

\begin{prop}\label{prop: subgroup reduced algs}
Let $p\in[1,\infty)$, let $G$ be a discrete group and let $H$ be a subgroup of $G$.
Then the canonical inclusion $\iota\colon H\hookrightarrow G$ induces an isometric
embedding $F^p_\lambda(H)\to F^p_\lambda(G)$.
\end{prop}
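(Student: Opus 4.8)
The plan is to exploit the fact that $L^p(G)$, as a $G$-representation, decomposes along right cosets of $H$ into a direct sum of copies of $L^p(H)$ on which $H$ acts by its left regular representation. Concretely, pick a transversal $T$ for the right cosets $H\backslash G$, so that $G=\bigsqcup_{t\in T}Ht$ as a set. This gives an isometric identification $L^p(G)\cong\ell^p(T,L^p(H))$, and under this identification the restriction of $\lambda_p^G$ to $H$ becomes the $\ell^p(T)$-amplification of $\lambda_p^H$: for $s\in H$, left translation by $s$ fixes each coset $Ht$ setwise and acts on the summand $\ell^p(Ht)\cong\ell^p(H)$ exactly as $\lambda_p^H(s)$. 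Hence for $f\in L^1(H)$ (extended by zero to a function on $G$, which is legitimate since $H$ is open in the discrete group $G$), we have $\lambda_p^G(f)\cong\mathrm{id}_{\ell^p(T)}\otimes\lambda_p^H(f)$ as operators on $\ell^p(T)\otimes_p L^p(H)$.

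From this the norm computation is immediate: amplification by an index set does not change the operator norm, so $\|\lambda_p^G(f)\|_{\B(L^p(G))}=\|\lambda_p^H(f)\|_{\B(L^p(H))}$ for every $f\in L^1(H)\subseteq L^1(G)$. In other words, the inclusion $L^1(H)\hookrightarrow L^1(G)$ (by extension by zero) is isometric for the two pseudofunction norms, and since $L^1(H)$ is dense in $F^p_\lambda(H)$ and its image is dense in the closure $\overline{\lambda_p^G(L^1(H))}\subseteq F^p_\lambda(G)$, it extends to an isometric embedding $F^p_\lambda(H)\to F^p_\lambda(G)$. Unitality is clear: the unit of $F^p_\lambda(H)$ is the point mass $\delta_e$, which maps to the point mass $\delta_e$ in $F^p_\lambda(G)$, the identity operator there. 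Naturality with respect to composition of subgroup inclusions follows from the functoriality of the extension-by-zero map on the level of $L^1$.

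The one point that requires a little care—and which I expect to be the main (mild) obstacle—is the measure-theoretic bookkeeping needed to make the identification $L^p(G)\cong\ell^p(H\backslash G,L^p(H))$ genuinely isometric and $H$-equivariant, including the choice of transversal and the verification that left translation by $s\in H$ really does act diagonally as claimed. For $G$ discrete (and $H$ therefore open) this is purely combinatorial and Haar measure is just counting measure, so there are no Radon--Nikodym factors to track; the subtlety is only in checking that the isometric isomorphism $\ell^p(G)\cong\bigoplus_{t\in T}^p\ell^p(Ht)$ intertwines $\lambda_p^G(s)$ with the block-diagonal operator whose $Ht$-block is the copy of $\lambda_p^H(s)$ transported via the bijection $H\to Ht$, $h\mapsto ht$. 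Once that is in place, the proof is complete. (One could alternatively phrase the argument via the conditional-expectation-type projection $\ell^p(G)\to\ell^p(H)$ and a completely-contractive splitting, but the transversal decomposition is the most transparent route and avoids invoking any $p$-operator-space machinery.)
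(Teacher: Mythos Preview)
Your proposal is correct and takes essentially the same approach as the paper: decompose $\ell^p(G)$ as an $\ell^p$-direct sum over cosets of $H$ and observe that the restriction of $\lambda_p^G$ to $H$ is a direct sum (amplification) of copies of $\lambda_p^H$, so the operator norms agree on $\C[H]$ and the map extends isometrically. One small point in your favor: the paper writes the decomposition over left cosets $xH$ (indexing by $G/H$), but left translation by $s\in H$ sends $xH$ to $sxH$, which need not equal $xH$ unless $H$ is normal---your use of right cosets $Ht$ is the correct formulation, since $sHt=Ht$ for $s\in H$.
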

\begin{proof}
We denote also by $\iota\colon \C[H]\to \C[G]$ the induced algebra homomorphism.
Let $\lambda^G_p\colon\C[G]\to \B(\ell^p(G))$ and $\lambda^H_p\colon\C[H]\to \B(\ell^p(H))$ denote the left regular
representations of $G$ and $H$, respectively.
Then $\lambda_p^G\circ\iota$ is conjugate, via an invertible isometry, to a multiple of $\lambda_p^H$.
More precisely, let $Q$ be a subset of $G$ containing exactly one element from each coset in $G/H$.
Then there is a canonical isometric isomorphism
\[
\ell^p(G) \cong \bigoplus_{x\in Q} \ell^p(xH).
\]
The representation $\lambda_p^G\circ\iota\colon\C[H]\to \B(\ell^p(G))$ leaves each of the subspaces $\ell^p(xH)\subseteq\ell^p(G)$
invariant, and hence
\[
\lambda_p^G\circ\iota \cong \bigoplus_{x\in Q} \lambda^H_p.
\]
It follows that
\[
\|\iota(f)\|_{F^p_\lambda(G)}
= \| (\lambda_p^G\circ\iota)(f) \|
= \left\| \bigoplus_{x\in Q} \lambda^H_p(f) \right\|
= \max_{x\in Q} \| \lambda^H_p(f) \|
= \|f\|_{F^p_\lambda(H)}
\]
for every $f\in\C[H]$. Thus, the canonical map $\iota\colon F^p_\lambda(H)\to F^p_\lambda(G)$ is isometric, as desired.
\end{proof}

We need some notation for the next result. If $G$ is a locally compact group and $N$ is a closed normal subgroup, then
there is a canonical surjective contractive homomorphism $\psi_N\colon L^1(G)\to L^1(G/N)$ which satisfies
\[\int_{G/N}\psi_N(f)(sN)\ d(sN)=\int_Gf(s)\ ds\]
for all $f$ in $L^1(G)$; see Theorem~3.5.4 in \cite{ReiSte00HarmAna}.

\begin{prop}
\label{prop: InducedQuotientMap}
Let $p\in[1,\infty)$, let $G$ be a locally compact group, let $N$ be a closed normal subgroup of $G$, and let $\pi\colon G\to G/N$ be the canonical quotient map.
If $\mathcal{E}$ denotes any of the classes $\mathcal{QSL}^p$, $\mathcal{SL}^p$, $\mathcal{QL}^p$, or $\mathcal{L}^p$,
then $\pi$ induces a natural contractive map $F_\mathcal{E}(G)\to F_\mathcal{E}(G/N)$ with dense range.
\end{prop}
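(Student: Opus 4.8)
The plan is to realize the map as the unique continuous extension of the surjective contractive homomorphism $\psi_N\colon L^1(G)\to L^1(G/N)$ recalled above. Since $L^1(G)$ is dense in $F_\mathcal{E}(G)$ and $L^1(G/N)$ is dense in $F_\mathcal{E}(G/N)$, it suffices to establish the norm estimate
\[
\|\psi_N(f)\|_{F_\mathcal{E}(G/N)} \leq \|f\|_{F_\mathcal{E}(G)} \qquad \text{for all } f\in L^1(G).
\]
Granting this, $\psi_N$ extends uniquely to a contractive linear map $F_\mathcal{E}(G)\to F_\mathcal{E}(G/N)$, which is automatically an algebra homomorphism since multiplication is continuous and $L^1(G)$ is dense.

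The key step is that contractive representations of $L^1(G/N)$ pull back along $\psi_N$ to contractive representations of $L^1(G)$. Let $\sigma\colon L^1(G/N)\to\B(E)$ belong to $\Rep_\mathcal{E}(G/N)$, so that $E\in\mathcal{E}$. Then $\sigma\circ\psi_N\colon L^1(G)\to\B(E)$ is a homomorphism into the bounded operators on a space in $\mathcal{E}$; it is contractive because $\sigma$ and $\psi_N$ are, and since $\psi_N$ is surjective one has $(\sigma\circ\psi_N)(L^1(G))=\sigma(L^1(G/N))$, so that $\sigma\circ\psi_N$ is nondegenerate whenever $\sigma$ is. Hence $\sigma\circ\psi_N\in\Rep_\mathcal{E}(G)$, and therefore, for every $f\in L^1(G)$,
\[
\|\sigma(\psi_N(f))\| = \|(\sigma\circ\psi_N)(f)\| \leq \|f\|_{F_\mathcal{E}(G)}.
\]
Taking the supremum over all $\sigma\in\Rep_\mathcal{E}(G/N)$ yields the estimate. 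Note that no closure property of the class $\mathcal{E}$ is needed here, since the target space $E$ is reused verbatim rather than modified.

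For density of the range, it is enough to observe that $\psi_N$ is already surjective onto $L^1(G/N)$, which is dense in $F_\mathcal{E}(G/N)$. Naturality reduces to two routine checks: the square relating $\psi_N$ to the canonical maps of $L^1$ into the completions commutes (true by construction), and the induced maps are compatible with iterated quotients $G\to G/N\to (G/N)/(M/N)\cong G/M$, which follows from the analogous identity for the homomorphisms $\psi$ together with uniqueness of continuous extensions.

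I do not anticipate a genuine obstacle. The only point requiring care is confirming that $\sigma\circ\psi_N$ really lies in $\Rep_\mathcal{E}(G)$, i.e.\ that both contractivity and nondegeneracy survive composition with $\psi_N$; this is exactly where surjectivity of $\psi_N$ is used. It is worth stressing why the argument is formal for the universal algebras $F_\mathcal{E}$ but not for $F^p_\lambda$: pulling back the left regular representation of $G/N$ along $\psi_N$ does not return the left regular representation of $G$, so the corresponding estimate $\|\psi_N(f)\|_{F^p_\lambda(G/N)} \leq \|f\|_{F^p_\lambda(G)}$ is not automatic, and is in fact the content of \autoref{thm:amenableKernel}, which genuinely requires $N$ to be amenable.
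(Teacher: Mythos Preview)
Your proposal is correct and follows essentially the same approach as the paper: both arguments extend $\psi_N$ by showing that every $\sigma\in\Rep_\mathcal{E}(G/N)$ pulls back to $\sigma\circ\psi_N\in\Rep_\mathcal{E}(G)$, yielding the norm estimate $\|\psi_N(f)\|_\mathcal{E}\leq\|f\|_\mathcal{E}$. Your write-up is more detailed (you address nondegeneracy, naturality, and contrast with $F^p_\lambda$), but the mathematical core is identical.
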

\begin{proof}
Let $\mathcal{E}$ denote any of the classes $\mathcal{QSL}^p$, $\mathcal{SL}^p$, $\mathcal{QL}^p$, or $\mathcal{L}^p$.
Denote by $\psi_N\colon L^1(G)\to L^1(G/N)$ the surjective contractive homomorphism described in the comments above.
Given $f\in L^1(G)$, we have
\begin{align*}
\|\psi_N(f)\|_{\mathcal{E}} &= \sup\{ \|(\omega\circ\psi_N)(f)\| \colon \omega \in \operatorname{Rep}_H(\mathcal{E}) \} \\
&\leq \sup\{ \| \rho(f)\| \colon \rho \in \operatorname{Rep}_G(\mathcal{E}) \}\\
&= \|f\|_{\mathcal{E}}.
\end{align*}
It follows that $\psi_N$ extends to a contractive homomorphism $F_\mathcal{E}(G)\to F_\mathcal{E}(G/N)$ with dense range.
\end{proof}

The above proposition shows that the universal completions of $L^1(G)$ are
functorial with respect to surjective group homomorphisms.
When $p$ is not equal to 1 or 2, it is not clear whether the resulting homomorphism $F^p(G)\to F^p(G/N)$
is a quotient map, or even if it is surjective.
In the following theorem, we prove that this is indeed the case whenever $N$ is amenable and $G/N$ is finite.

\begin{thm}
\label{thm:amenableKernel}
Let $G$ be a discrete group, let $p\in [1,\infty)$, and let $N$ be an amenable normal
subgroup of $G$ such that $G/N$ is finite.
Then the canonical map $G\to G/N$ induces a natural quotient homomorphism
$F_\lambda^p(G)\to F_\lambda^p(G/N)$.
\end{thm}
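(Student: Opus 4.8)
The plan is to exhibit, for each element of $F^p_\lambda(G/N)$, explicit preimages in $\C[G]$ whose norms converge to the target norm; the preimages will come from averaging the ``obvious'' lift over a F{\o}lner sequence in $N$, and the only substantial point is a norm estimate. First the soft part: since $N$ is amenable and $G/N$ is finite, $G$ is amenable, so \autoref{thm:AmenTFAE} identifies $F^p(G)$ with $F^p_\lambda(G)$ and $F^p(G/N)$ with $F^p_\lambda(G/N)$ isometrically; combined with \autoref{prop: InducedQuotientMap} this shows that $\psi_N$ extends to a contractive homomorphism $\ov\psi_N\colon F^p_\lambda(G)\to F^p_\lambda(G/N)$ with dense range (the case $p=1$ is immediate, since $F^1_\lambda(H)=\ell^1(H)$ for every discrete group $H$). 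As $G/N$ is finite, $F^p_\lambda(G/N)=\C[G/N]$ is finite-dimensional, so $\ov\psi_N$ is automatically surjective, and to see that it is a quotient homomorphism it suffices to produce, for each $\ov f\in\C[G/N]$, preimages $g_k\in\C[G]$ with $\|g_k\|_{F^p_\lambda(G)}\to\|\ov f\|_{F^p_\lambda(G/N)}$ (the reverse estimate $\|\ov f\|_{F^p_\lambda(G/N)}\le\|h\|_{F^p_\lambda(G)}$ for every preimage $h$ being automatic from contractivity of $\ov\psi_N$).

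Fix coset representatives $t_1,\dots,t_n$ of $G/N$, where $n=[G:N]$, and write $\ov f=\sum_{j=1}^nc_j\delta_{t_jN}$. For a finite nonempty set $F\subseteq N$ I would take
\[
g_F=\frac1{|F|}\sum_{j=1}^nc_j\sum_{m\in F}\delta_{t_jm}\in\C[G],
\]
which satisfies $\psi_N(g_F)=\ov f$. Using the isometric decomposition $\ell^p(G)=\bigoplus_{i=1}^n\ell^p(t_iN)$ together with the identifications $\ell^p(t_iN)\cong\ell^p(N)$, $\delta_{t_ir}\mapsto\delta_r$, a direct computation identifies $\lambda^G_p(g_F)$ with the $n\times n$ operator matrix over $\B(\ell^p(N))$ whose $(l,i)$-entry is $M_{li}\,\lambda^N_p(v_{li})\,A^{(i)}_F$; here $M=(M_{li})=\lambda^{G/N}_p(\ov f)$, the elements $v_{li}=t_l^{-1}t_{j(l,i)}t_i\in N$ record the cocycle of the extension ($j(l,i)$ being the index with $t_{j(l,i)}t_iN=t_lN$), and $A^{(i)}_F=\frac1{|F|}\sum_{m\in F}\lambda^N_p(t_i^{-1}mt_i)$ is the averaging operator over the conjugated set $t_i^{-1}Ft_i$. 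Being an average of isometries, each $A^{(i)}_F$ is a contraction.

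Given $\delta>0$, I would use the F{\o}lner criterion in $N$ to pick $F$ that is $(S,\delta)$-F{\o}lner (that is, $|sF\triangle F|<\delta|F|$ for all $s\in S$) for the finite set $S=\{t_iv_{li}t_i^{-1}:1\le l,i\le n\}\subseteq N$; a short computation then gives $\|\lambda^N_p(v_{li})A^{(i)}_F-A^{(i)}_F\|<\delta$ for all $l,i$. Hence $\lambda^G_p(g_F)=T_F+E_F$, where $T_F$ is the operator matrix $(M_{li}A^{(i)}_F)_{l,i}$ and $\|E_F\|\le C\delta$, with $C$ depending only on $\ov f$ (one may take $C$ to be the operator norm of the entrywise-modulus of $M$). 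Now $T_F=(M\otimes\id_{\ell^p(N)})\circ\big(\bigoplus_{i=1}^nA^{(i)}_F\big)$; since $\ell^p(N)$ is an $L^p$-space one has $\|M\otimes\id_{\ell^p(N)}\|=\|M\|$, and the block-diagonal factor has norm at most $1$, so $\|T_F\|\le\|M\|=\|\ov f\|_{F^p_\lambda(G/N)}$. Therefore
\[
\|g_F\|_{F^p_\lambda(G)}=\|\lambda^G_p(g_F)\|\le\|T_F\|+\|E_F\|\le\|\ov f\|_{F^p_\lambda(G/N)}+C\delta,
\]
and taking $F=F_k$ to be $(S,1/k)$-F{\o}lner yields preimages $g_k=g_{F_k}$ with $\|g_k\|_{F^p_\lambda(G)}\to\|\ov f\|_{F^p_\lambda(G/N)}$, completing the proof.

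The crux is the shape of the operator matrix of $\lambda^G_p(g_F)$: it does not factor as $M$ tensored with a single averaging operator, both because of the cocycle factors $\lambda^N_p(v_{li})$ and, more seriously, because the twisted averages $A^{(i)}_F$ run over the conjugated sets $t_i^{-1}Ft_i$, which need not be approximately equal as $i$ varies (a F{\o}lner set of $N$ need not be approximately invariant under conjugation by the $t_i$). The cocycle factors are absorbed into the $O(\delta)$ error $E_F$ using the F{\o}lner condition; the $i$-dependence of the $A^{(i)}_F$ turns out to be harmless precisely because $(M_{li}A^{(i)}_F)_{l,i}$ nonetheless factors as $M\otimes\id_{\ell^p(N)}$ post-composed with the block-diagonal contraction $\bigoplus_iA^{(i)}_F$, and this is where the $L^p$-space identity $\|M\otimes\id\|=\|M\|$ is used. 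Finiteness of $G/N$ enters at several points: it gives surjectivity of $\ov\psi_N$ for free, it makes $F^p_\lambda(G/N)$ finite-dimensional, and it lets us tame all of the finitely many cocycle elements $v_{li}$ with a single F{\o}lner set.
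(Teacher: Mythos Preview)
Your argument is correct and follows the same overall strategy as the paper: construct preimages of $\ov f\in\C[G/N]$ by averaging the obvious lift over a F{\o}lner set in $N$, and show that the resulting norms converge to $\|\ov f\|_{F^p_\lambda(G/N)}$ by comparing with an operator that factors through $\lambda^{G/N}_p(\ov f)\otimes\id$.

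The execution of the norm estimate differs, however, and your version is arguably cleaner. The paper compares $\widetilde f_k$ with $f_k=T_k\circ\Phi(f\otimes 1)$, where $T_k$ is the F{\o}lner average in $\C[N]$ acting on $\ell^p(G)$; to show $\|T_k\|_p=1$ it invokes a theorem of Paterson (positive-coefficient convolution operators on amenable groups have the same norm on every $\ell^p$), and to show $\|\widetilde f_k-f_k\|_p\to 0$ it uses Riesz--Thorin interpolation to reduce to an explicit $\ell^1$ computation involving the finite image of the $2$-cocycle. You instead write $\lambda^G_p(g_F)$ as an $n\times n$ operator matrix, peel off the cocycle factors $\lambda^N_p(v_{li})$ as an $O(\delta)$ error using only the triangle inequality and the F{\o}lner bound $\|\lambda^N_p(v_{li})A^{(i)}_F-A^{(i)}_F\|\le |t_iv_{li}t_i^{-1}F\triangle F|/|F|$, and then observe that the remaining matrix $(M_{li}A^{(i)}_F)$ factors exactly as $(M\otimes\id)\circ\bigoplus_iA^{(i)}_F$ even though the averaging operators vary with the column. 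This avoids both Paterson's theorem and interpolation; the price is that you explicitly lean on amenability of $G$ (via \autoref{thm:AmenTFAE} and \autoref{prop: InducedQuotientMap}) to get contractivity of $\ov\psi_N$, whereas the paper builds everything directly inside $\B(\ell^p(G))$. Both routes use finiteness of $G/N$ in the same essential way: to control only finitely many cocycle values with a single F{\o}lner set.
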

\begin{proof}
We establish some notation first:
\bi
\item
For $s\in G$, we write $u_s$ for the corresponding element in $\C[G]$, and $\delta_s\in \ell^p(G)$ for the corresponding basis element;
\item
For $s\in G$, we write $v_{sN}$ for the corresponding element in $\C[G/N]$, and $\delta_{sN}\in \ell^p(G/N)$ for the corresponding basis element;
\item
For $n\in N$, we write $w_n$ for the corresponding element in $\C[N]$, and $\delta_n\in \ell^p(N)$ for the corresponding basis element;
\item
We write $\pi\colon \C[G]\to \C[G/N]$ for the map given by $u_s\mapsto v_{sN}$ for $s\in G$.
\ei
Fix a section $\sigma\colon G/N\to G$, and define an isometric isomorphism
\[
\varphi\colon \ell^p(G/N)\otimes \ell^p(N)\to \ell^p(G)
\]
by $\varphi(\delta_{sN}\otimes \delta_n)=\delta_{\sigma(sN)n}$ for $s\in G$ and $n\in N$.
Let
\[
\Phi\colon \B(\ell^p(G/N))\otimes \B(\ell^p(N))\to \B(\ell^p(G))
\]
be the isometric isomorphism given by $\Phi(x)=\varphi\circ x\circ\varphi^{-1}$ for $x\in\B(\ell^p(G/N))\otimes \B(\ell^p(N))$.
It is a routine exercise to check that
\[
\Phi(v_{sN}\otimes w_n)(\delta_{\sigma(tN)m})=\delta_{\sigma(stN)nm}
\]
for all $s,t\in G$ and all $n,m\in N$.

Let $f$ be an element in $\C[G/N]$.
We want to show that
\[
\|f\|=\inf \{\|\widetilde{f}\|\colon \widetilde{f}\in \C[G], \pi(\widetilde{f})=f\}.
\]
For this, it is enough to find sequences $(f_k)_{k\in\N}$ in $\B(\ell^p(G))$ (but not necessarily in $\C[G]$) and $(\widetilde{f}_k)_{k\in\N}$ in $\C[G]\subseteq \B(\ell^p(G))$, such that
\be
\item
$\|f_k\|\leq \|f\|$ for all $k\in \N$;
\item
$\pi(\widetilde{f}_k)=f$ for all $k\in \N$; and
\item
$\lim\limits_{k\to\I}\|\widetilde{f}_k-f_k\|=0$.
\ee

Let $S\subseteq G$ be a finite set such that $f$ can be written as a finite linear combination
$f=\sum\limits_{s\in S}a_{sN}v_{sN}$, where $a_{sN}$ is a complex number for $s\in S$.
Using amenability of $N$, choose a F{\o}lner sequence $(F_k)_{k\in\N}$ of finite subsets of $N$ satisfying
\[
%\lim_{k\to \I}\frac{|gF_k\triangle F_kg|}{|F_k|}=0 \ \ \mbox{ and } \ \
\lim_{k\to \I}\frac{|F_k\triangle F_kx|}{|F_k|}=0
\]
for all $x\in N$.
For $k\in \N$, set $T_k=\frac{1}{|F_k|}\sum\limits_{n\in F_k} w_n$, which is an element in $\C[N]$.\\
\ \\
\indent Let $k\in\N$. We claim that $\|T_k\|_{F^p(N)}=1$.

Note that $T_k$ is a linear combination of the canonical generating invertible isometries with positive coefficients (the coefficients are all either $\frac{1}{|F_k|}$ or $0$).
It follows from Theorem~4.19 in \cite{Pat88Amen} that $\|T_k\|_p=\|T_k\|_2$.
Furthermore, the equivalence between (1) and (8) in Theorem~2.6.8 in \cite{BroOza08Book} shows that $\|T_k\|_2=1$.
The claim is proved.\\
\ \\
\indent Fix $k\in\N$, and set
\[
f_k=T_k\circ\Phi(f\otimes 1),
\]
which is an element in $\B(\ell^p(G))$.
(Note that $f_k$ will not
in general belong to the group algebra $\C[G]$.)
Basic properties of $p$-tensor products give $\|\Phi(f\otimes 1)\|=\|f\|$, and hence
$\|f_k\| \leq\|T_k\|\cdot\|f\|=\|f\|$, so condition (1) above is satisfied.
Set
\[
\widetilde{f}_k = \frac{1}{|F_k|}\sum_{s\in S}\sum_{n\in F_k} a_{sN} u_{n\sigma(sN)},
\]
which is an element in $\C[G]\subseteq \B(\ell^p(G))$.
It is clear that $\pi(\widetilde{f}_k)=f$, so condition (2) above
is also satisfied.
We need to check (3).
With $M=\max\limits_{s\in S}|a_{sN}|$, we have
\begin{align*}
\|\widetilde{f}_k-f_k\|_p
&=\frac{1}{|F_k|} \left\| \sum_{s\in S} a_{sN}\sum_{n\in F_k} u_{n\sigma(sN)}-u_n\Phi(v_{sN}\otimes 1) \right\|_p \\
&\leq M \left( \frac{1}{|F_k|} \left\| \sum_{n\in F_k} u_{n\sigma(sN)}-u_n\Phi(v_{sN}\otimes 1) \right\|_p \right).
\end{align*}
Given $s$ in $G$, it is therefore enough to show that
\[
\lim_{k\to\I} \frac{1}{|F_k|} \left\| \sum_{n\in F_k} u_{n\sigma(sN)}-u_n\Phi(v_{sN}\otimes 1) \right\|_p=0.
\]

Fix $s$ in $G$ and set
\[
\theta_k=\frac{1}{|F_k|}\sum\limits_{n\in F_k} u_{n\sigma(sN)}-u_n\Phi(v_{sN}\otimes 1),
\]
regarded as an operator on $c_c(G)$.
It is immediate that for $q\in [1,\I]$, the operator $\theta$ extends to a bounded operator
$\theta_k^{(q)}$ on $\ell^q(G)$ with $\left\|\theta_k^{(q)}\right\|_q\leq 2$, and the Riesz-Thorin Interpolation Theorem gives
\[
\left\|\theta_k^{(p)}\right\|_p\leq \left\|\theta_k^{(1)}\right\|^{\frac{1}{p}}_1\left\|\theta_k^{(\I)}\right\|^{\frac{1}{p'}}_\I\leq 2\left\|\theta_k^{(1)}\right\|^{\frac{1}{p}}_1.
\]
It therefore suffices to show that $\lim\limits_{k\to \I}\left\|\theta_k^{(1)}\right\|_1=0$.

Let $c\colon G\times G\to N$ be the 2-cocycle given by
\[
c(t,r)\sigma(tN)\sigma(rN)=\sigma(trN)
\]
for all $t$ and $r$ in $G$. Since $G/N$ is finite, the image $\mathrm{Im}(c)$ of the $2$-cocycle $c$ is a finite
subset of $N$. Given $t\in G$ and $m\in N$, we have
\begin{align*}
\theta^{(1)}_k(\delta_{\sigma(tN)m})
&= \frac{1}{|F_k|}\sum_{n\in F_k}
\left( \delta_{n\sigma(sN)\sigma(tN)m} - \delta_{n\sigma(stN)m} \right) \\
&= \frac{1}{|F_k|}\sum_{n\in F_k}
\left( \delta_{n\sigma(sN)\sigma(tN)m} - \delta_{nc(s,t)\sigma(sN)\sigma(tN)m} \right) \\
%&= \frac{1}{|F_k|}\sum_{h\colon c(g,h)=c} \sum_{n\in F_k}
%\left( \delta_{n\sigma(gN)\sigma(hN)m} - \delta_{nc\sigma(gN)\sigma(hN)m} \right)
\end{align*}
Thus,
\begin{align*}
\left\| \theta_k^{(1)} \right\|_1
&= \sup_{t\in G}\sup_{m\in N}
\left\| \theta_k^{(1)}(\delta_{\sigma(tN)m}) \right\|_1 \\
&= \sup_{x\in \mathrm{Im}(c)} \sup_{\substack{t\in G\colon \\ c(s,t)=x }}\sup_{m\in N}
\frac{1}{|F_k|} \left\| \sum_{n\in F_k} \delta_{n\sigma(sN)\sigma(tN)m} -\delta_{nx\sigma(sN)\sigma(tN)m} \right\|_1 \\
&= \sup_{x\in \mathrm{Im}(c)} \sup_{\substack{t\in G\colon \\ c(s,t)=x }}\sup_{m\in N}
\frac{\left| F_k\sigma(sN)\sigma(tN)m\ \triangle\ F_kx\sigma(sN)\sigma(tN)m \right|}{|F_k|} \\
&= \sup_{x\in \mathrm{Im}(c)} \sup_{\substack{t\in G\colon \\ c(s,t)=x }}\sup_{m\in N}
\frac{\left| F_k \triangle F_kx \right|}{|F_k|}\\
&= \sup_{x\in \mathrm{Im}(c)} \frac{\left| F_k \triangle F_kx \right|}{|F_k|}
\end{align*}
Since $(F_k)_k$ is a F{\o}lner sequence and $\mathrm{Im}(c)$ is finite, the above computation implies that
$\lim\limits_{k\to\infty} \left\| \theta_k^{(1)} \right\|_1 = 0,$ as desired.
This finishes the proof.
\end{proof}

We point out that the assumption that $N$ be amenable is necessary in the theorem above,
at least when $p\neq 1$, as the next example shows.

\begin{eg} \label{eg:NeedAmKer}
Fix $p\in (1,\I)$.
Let $\mathbb{F}_2$ denote the free group on two generators, and let $N$ be a normal subgroup of
$\mathbb{F}_2$ such that $\mathbb{F}_2/N$ is isomorphic to $\Z_2$. The quotient map $\pi\colon \mathbb{F}_2\to \Z_2$
does not induce a quotient map $F^p_\lambda(\mathbb{F}_2)\to F^p_\lambda(\Z_2)$, since $F^p_\lambda(\mathbb{F}_2)$ is simple
by Corollary~3.11 in \cite{PoyHej14arX:SimpleLp}.\end{eg}

On the other hand, we suspect that no condition on $G/N$ is needed for the conclusion of \autoref{thm:amenableKernel} to hold (and that, in particular, the group $G$ need not be amenable), but we have not been able to prove the more general statement.
For $p=2$, this can be proved as follows. Since $N$ is amenable, its trivial representation is weakly contained
in its left regular representation (see Theorem~2.6.8 in \cite{BroOza08Book}). Using the fact that the induction functor
preserves weak containment of representations, this shows
that the left regular representation of $G/N$ is weakly contained in the left regular representation of $G$.
By the comments at the beginning of Appendix~D in \cite{BroOza08Book}, this implies that there is a $\ast$-homomorphism
$C^*_\lambda(G)\to C^*_\lambda(G/N)$ with dense range.
Finally, basic $C^*$-algebra theory (for example, the fact that $\ast$-homomorphisms have closed range)
shows that this map is indeed a quotient map.

There is an alternative proof of this fact using F\o lner sets, similarly to what we did in the proof of
\autoref{thm:amenableKernel}, but the argument also involves the GNS construction, which so far has no analog
in the context of $\mathcal{L}^p$-operator algebras.

\section{An application: When is \texorpdfstring{$F^p(\Z)$}{Fp(Z)} isomorphic to
\texorpdfstring{$F^q(\Z)$}{Fq(Z)}?}

The goal of this section is to show that for $p$ and $q$ in $[1,\infty)$, there is an isometric isomorphism between
$F^p(\Z)$ and $F^q(\Z)$ if and only if either $p=q$ or $\frac{1}{p}+\frac{1}{q}=1$. The strategy will be to use \autoref{thm:amenableKernel}, Proposition~3.13 in \cite{GarThie14pre:LpGpAlg}, and the fact that every homeomorphism of $S^1$ must map a pair of antipodal points to antipodal points, to reduce this to the case when the group is $\Z_2$, where things can be proved more directly. The fact that the spectrum of $F^p(\Z)$ is the circle is crucial in
our proof, and we do not know how to generalize these methods to deal with, for example, $\Z^2$.

We begin by looking at the group $\mathcal{L}^p$-operator algebra of a finite cyclic group.

\begin{eg} \label{eg: FpZn}
Let $n$ in $\N$ and let $p\in [1,\I)$. Consider the group $\mathcal{L}^p$-operator algebra $F^p(\Z_n)$ of $\Z_n$. Then $F^p(\Z_n)$ is the Banach
subalgebra of $\B(\ell^p_n)$ generated by the cyclic shift of order $n$
\[s_n= \left( \begin{array}{ccccc}
0 &  &  &  & 1 \\
1 & 0 & &  &  \\
& \ddots & \ddots &  &  \\
&  & \ddots & 0 &  \\
&  &  & 1 & 0 \end{array} \right).\]
(The algebra $\B(\ell^p_n)$ is $M_n$ with the $L^p$-operator norm.) It is easy to check that $F^p(\Z_n)$ is isomorphic, as a complex algebra, to $\C^n$,
but the canonical embedding $F^p(\Z_n)\hookrightarrow M_n$ is not as diagonal matrices.

It turns out that computing the norm of a vector in $\C^n\cong F^p(\Z_n)$ is challenging for $p$ different
from 1 and 2, essentially because computing $p$-norms of matrices that are not diagonal is difficult.
Indeed, set
$\omega_n=e^{\frac{2\pi i}{n}}$, and set
\[u_n= \frac{1}{\sqrt{n}} \left( \begin{array}{ccccc}
1 & 1 & 1 & \cdots & 1 \\
1 &\omega_n &\omega_n^2 & \cdots &\omega_n^{n-1} \\
1 &\omega_n^2 &\omega_n^4 & \cdots &\omega_n^{2(n-1)} \\
\vdots & \vdots & \vdots & \ddots & \vdots \\
1 &\omega_n^{n-1} &\omega_n^{2(n-1)} & \cdots &\omega_n^{(n-1)^2} \end{array} \right).\]
If $\xi=(\xi_1,\ldots,\xi_n)\in \C^n$, then its norm as an element in $F^p(\Z_n)$ is
\[ \|\xi\|_{F^p(\Z_n)}=\left\| u_n \left( \begin{array}{ccccc}
\xi_1 &  &  &    \\
 & \xi_2 & &   \\
 &  & \ddots &    \\
&  &  &  \xi_n  \end{array} \right)u_n^{-1}\right\|_p.\]
\indent The matrix $u_n$ is a unitary (in the sense that its conjugate transpose is its inverse), and hence $\|\xi\|_{F^2(\Z_n)}=\|\xi\|_\infty$. The norm
on $F^2(\Z_n)$ is therefore well-understood and easy to compute. On the other hand, if $1\leq p\leq q\leq 2$,
then $\|\cdot\|_{F^q(\Z_n)}\leq \|\cdot\|_{F^{p}(\Z_n)}$ by Corollary~3.20 in \cite{GarThie14pre:LpGpAlg}.
In particular, the norm $\|\cdot\|_{F^p(\Z_n)}$ always dominates the norm
$\|\cdot\|_\I$.\end{eg}

Computing the automorphism group of $F^p(\Z_n)$ is not easy when $p\neq 2$, since not every permutation of the
coordinates of $\C^n\cong F^p(\Z_n)$ induces an isometric isomorphism. Our next result asserts that the cyclic
shift on $\C^n$ is isometric.

\begin{prop} \label{prop: shift invariance norm on FpZn}
Let $n$ in $\N$ and let $p$ in $[1,\I)$. Denote by $\tau\colon\C^n \to \C^n$ the cyclic forward shift, this is,
$$\tau(x_0,\ldots,x_{n-1})=(x_{n-1},x_0,\ldots,x_{n-2})$$
for all $(x_0,\ldots,x_{n-1})\in \C^n$. Then $\tau\colon F^p(\Z_n)\to F^p(\Z_n)$ is an isometric isomorphism.
\end{prop}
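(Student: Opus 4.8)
The plan is to use the explicit description of $F^p(\Z_n)$ from \autoref{eg: FpZn} to recognise $\tau$ as the automorphism of the group algebra $\C[\Z_n]$ induced by a generating character of $\Z_n$, and then to observe that any such automorphism is spatially implemented on $\ell^p_n$ by conjugation by a diagonal invertible isometry. First I would note that $\tau$ is automatically an algebra automorphism of $\C^n$ (every permutation of the coordinates is one, for the coordinatewise product), so the content of the statement is entirely that $\tau$ preserves $\|\cdot\|_{F^p(\Z_n)}$. Next, recall from \autoref{eg: FpZn} that under the isomorphism $F^p(\Z_n)\cong\C^n$ the operator $s_n$ corresponds to the tuple of its eigenvalues, and that --- because of the explicit form of $u_n$ --- this tuple is a geometric progression $(\eta^{0},\eta^{1},\ldots,\eta^{n-1})$ for some primitive $n$-th root of unity $\eta$. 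Applying $\tau$ cyclically shifts this tuple, hence multiplies it, and therefore $s_n$, by the primitive $n$-th root of unity $\zeta:=\eta^{-1}$. Since $\tau$ is an algebra homomorphism and $s_n$ generates $F^p(\Z_n)$, it follows that $\tau$ is the unique algebra automorphism of $F^p(\Z_n)$ with $\tau(s_n)=\zeta s_n$, equivalently $\tau(s_n^k)=\zeta^k s_n^k$ for all $k$.

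Now I would implement this automorphism spatially. Identifying $\ell^p_n$ with $\ell^p(\Z_n)$ with standard basis $(\delta_j)_{j\in\Z_n}$, we have $F^p(\Z_n)\subseteq\B(\ell^p_n)$ generated by $s_n=\lambda_p(u)$, where $u$ is the canonical generator of $\Z_n$ and $\lambda_p(u)\delta_j=\delta_{j+1}$. Let $M\in\B(\ell^p_n)$ be the diagonal operator with $M\delta_j=\zeta^j\delta_j$. Since the $\zeta^j$ are unimodular, $M$ is an invertible isometry of $\ell^p_n$, so conjugation by $M$ is an isometric automorphism of $\B(\ell^p_n)$. A one-line computation on basis vectors gives $M\lambda_p(u^j)M^{-1}=\zeta^j\lambda_p(u^j)$ for all $j$; in particular conjugation by $M$ sends $s_n$ to $\zeta s_n$, so it maps the algebra generated by $s_n$, namely $F^p(\Z_n)$, onto itself, and restricts there to the automorphism $s_n\mapsto\zeta s_n$, which is precisely $\tau$. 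Since conjugation by $M$ is isometric on $\B(\ell^p_n)$, its restriction $\tau$ is isometric on $F^p(\Z_n)$; and $\tau$ is evidently bijective, so it is an isometric isomorphism.

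I do not expect a serious obstacle. The computation $M\lambda_p(u^j)M^{-1}=\zeta^j\lambda_p(u^j)$ is immediate, and the fact that a unimodular diagonal operator is an invertible isometry of every $\ell^p_n$ is elementary. The one step demanding genuine care is the identification in the first paragraph --- reading off from \autoref{eg: FpZn} that the cyclic shift $\tau$ corresponds to an automorphism of the form $s_n\mapsto\zeta s_n$ with $\zeta$ a primitive $n$-th root of unity. This is also the conceptual reason the \emph{cyclic} permutation is isometric, whereas, as the paragraph preceding the proposition stresses, an arbitrary permutation of the coordinates of $\C^n\cong F^p(\Z_n)$ need not be.
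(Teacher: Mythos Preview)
Your proof is correct and is essentially the same argument as the paper's, just packaged more conceptually. The paper works directly with the formula $\|x\|_{F^p(\Z_n)}=\|u\,\mathrm{d}(x)\,u^{-1}\|_p$ and uses the identities $\mathrm{d}(\tau(x))=s\,\mathrm{d}(x)\,s^{-1}$ and $us=\mathrm{d}(\omega)u$ to get $\rho(\tau(x))=\mathrm{d}(\omega)\,\rho(x)\,\mathrm{d}(\bar\omega)$, then concludes since $\mathrm{d}(\omega)$ is an isometry of $\ell^p_n$; your diagonal operator $M$ with $M\delta_j=\zeta^j\delta_j$ is exactly this $\mathrm{d}(\omega)$ (with $\zeta=\omega_n$), so the implementing isometry is the same. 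Your extra step of first recognising $\tau$ as the character twist $s_n\mapsto\zeta s_n$ is a nice way to see \emph{why} the cyclic shift (and not an arbitrary permutation) should be isometric, but the computational content of the two proofs coincides.
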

\begin{proof}
We follow the notation from \autoref{eg: FpZn}, except that we drop the subscript $n$ everywhere, so we write $u$
in place of $u_n$, and we write $s$ in place of $s_n$. (We still denote $\omega_n=e^{\frac{2\pi i}{n}}$.)\\
\indent For $x$ in $\C^n$, let $\mbox{d}(x)$ denote the diagonal $n\times n$ matrix with $\mbox{d}(x)_{j,k}=\delta_{j,k}x_j$ for $0\leq j,k\leq n-1$.
Denote by $\rho\colon\C^n\to M_n$ the algebra homomorphism given by $\rho(x)= u\mbox{d}(x)u^{-1}$ for $x\in\C^n$.
Then
\[
\|x\|_{F^p(\Z_n)} =\|\rho(x)\|_p = \|u\mbox{d}(x)u^{-1}\|_p
\]
for all $x\in \C^n$.\\
\indent Set $\omega=(1,\omega_n^1,\ldots,\omega_n^{n-1})\in\C^n$, and denote by $\overline{\omega}$ its (coordinatewise) conjugate.
Given $x\in\C^n$, it is easy to check that
\[
\mathrm{d}(\tau(x)) = s \mbox{d}(x) s^{-1}, \ \  us = \mbox{d}(\omega)u, \ \ \text{ and }\ \ s^{-1}u^{-1} = u^{-1} \mbox{d}(\overline{\omega}).
\]
It follows that
\[
\|\tau(x)\|_{F^p(\Z_n)} =\|u \mathrm{d}(\tau(x))u^{-1}\|_p =\|u s \mbox{d}(x) s^{-1} u^{-1}\|_p =\|\mbox{d}(\omega) u \mbox{d}(x) u^{-1} \mbox{d}(\overline{\omega})\|_p.
\]
Since $\mbox{d}(\omega)$ and $\mbox{d}(\overline{\omega})$ are isometries in $\B(\ell^p_n)$, we conclude that
\[
\|\tau(x)\|_{F^p(\Z_n)}
=\|\mbox{d}(\omega) u \tau(x) u^{-1} \mbox{d}(\overline{\omega})\|_p
=\|u \mbox{d}(x) u^{-1}\|_p
=\|x\|_{F^p(\Z_n)},
\]
as desired. \end{proof}

The fact that $F^p(\Z_2)$ is isometrically isomorphic to $F^q(\Z_2)$ only in the trivial cases can be shown directly
by computing the norm of a specific element. We do not know whether a similar computation can be done for other cyclic
groups. However, knowing this for just $\Z_2$ is enough to prove \autoref{thm: FpZ not isom}.

\begin{prop}\label{prop: FpZ2 not isom}
Let $p$ and $q$ be in $[1,\infty)$. Then $F^p(\Z_2)$ is isometrically isomorphic to $F^{q}(\Z_2)$ if and only if
either $p=q$ of $\frac{1}{p}+\frac{1}{q}=1$.
\end{prop}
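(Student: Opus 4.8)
The plan is to reduce the computation of the norm on $F^p(\Z_2)$ to an explicit one-parameter family of $2\times 2$ matrices, and then show that this family of norms determines $p$ up to conjugation. Concretely, following \autoref{eg: FpZn} with $n=2$, we have $\omega_2=-1$, the matrix $u_2=\tfrac{1}{\sqrt 2}\left(\begin{smallmatrix}1&1\\1&-1\end{smallmatrix}\right)$, and for $\xi=(\xi_0,\xi_1)\in\C^2\cong F^p(\Z_2)$ the norm is $\|\xi\|_{F^p(\Z_2)}=\|u_2\,\mathrm{d}(\xi)\,u_2^{-1}\|_p$, where $u_2\,\mathrm{d}(\xi)\,u_2^{-1}=\tfrac12\left(\begin{smallmatrix}\xi_0+\xi_1&\xi_0-\xi_1\\\xi_0-\xi_1&\xi_0+\xi_1\end{smallmatrix}\right)$. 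Using \autoref{prop: shift invariance norm on FpZn} (the cyclic shift $(\xi_0,\xi_1)\mapsto(\xi_1,\xi_0)$ is isometric) together with the obvious scaling and the fact that the norm only depends on $|\xi_0|,|\xi_1|$ (one can absorb phases into the diagonal isometries $\mathrm{d}(\omega)$ as in the proof of \autoref{prop: shift invariance norm on FpZn}), one sees that an isometric isomorphism of the underlying $\C$-algebras $\C^2$ is necessarily, up to the flip, the identity; so $F^p(\Z_2)\cong F^q(\Z_2)$ isometrically if and only if the two norms on $\C^2$ literally coincide. Thus it suffices to exhibit a single vector, e.g. $\xi=(1,t)$ for a well-chosen $t\in(0,1)$, whose $F^p$-norm, viewed as a function of $p$, is injective modulo $p\leftrightarrow p'$.

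The key computation is therefore the $p$-operator norm of the matrix $A_t=\left(\begin{smallmatrix}1+t&1-t\\1-t&1+t\end{smallmatrix}\right)$ (dropping the harmless factor $\tfrac12$), acting on $\ell^p_2$. First I would record the ``if'' direction, which is immediate: for $p=q$ there is nothing to prove, and for $\tfrac1p+\tfrac1q=1$ the map $B\mapsto B^{\mathrm t}$ (transpose) is an isometric isomorphism $\B(\ell^p_2)\to\B(\ell^q_2)$ which carries the canonical shift $s_2$ (symmetric) to itself, hence restricts to an isometric isomorphism $F^p(\Z_2)\to F^q(\Z_2)$; alternatively this already follows from Proposition~3.13 in \cite{GarThie14pre:LpGpAlg}. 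For the ``only if'' direction I would compute $\|A_t\|_p$ explicitly. Since $A_t$ is a positive multiple of a symmetric matrix with eigenvectors $(1,1)$ and $(1,-1)$, its $\ell^2$ norm is $\max\{2,2t\}=2$ for $t\le 1$; for general $p$ one maximizes $\|A_t(x_0,x_1)\|_p/\|(x_0,x_1)\|_p$. By convexity and symmetry the extremal vector can be taken of the form $(1,\pm r)$ with $r\in[0,1]$, reducing the problem to maximizing a smooth function of one real variable $r$; differentiating gives a critical-point equation whose solution, and the resulting value $\|A_t\|_p$, depend real-analytically on $p$.

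The main obstacle is exactly this extremal problem: unlike the $p=1,2,\infty$ cases there is no closed form for $\|A_t\|_p$, so I would not try to solve the critical-point equation, but instead argue that $p\mapsto\|A_t\|_p$ (say for fixed $t=\tfrac12$) is strictly monotone on $[1,2]$ and strictly monotone on $[2,\infty)$, with a strict maximum or minimum at $p=2$, so that $\|A_t\|_p=\|A_t\|_q$ with $p\le 2\le q$ forces $p$ and $q$ to be conjugate. Strict monotonicity on $[1,2]$ can be extracted from Corollary~3.20 in \cite{GarThie14pre:LpGpAlg} (which gives $\|\cdot\|_{F^q(\Z_n)}\le\|\cdot\|_{F^p(\Z_n)}$ for $1\le p\le q\le 2$), upgraded to \emph{strict} inequality for the particular vector $(1,t)$ by checking that the extremal vector for $\|A_t\|_p$ is not an eigenvector of $A_t$ when $p\ne 2$ (so the interpolation inequality used in that corollary is strict); the behaviour on $[2,\infty)$ then follows by applying the conjugation isometry $B\mapsto B^{\mathrm t}$, which shows $\|A_t\|_p=\|A_t\|_{p'}$, reflecting the graph about $p=2$. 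Combining the two halves of the argument yields the claimed equivalence.
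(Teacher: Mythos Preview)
Your approach has a fatal gap at the choice of test vector. For $t\in(0,1)$ real, the matrix
\[
\tfrac{1}{2}A_t=\tfrac{1}{2}\begin{pmatrix}1+t&1-t\\1-t&1+t\end{pmatrix}
\]
is doubly stochastic, hence its $\ell^p$-operator norm equals $1$ for \emph{every} $p\in[1,\infty]$: the vector $(1,1)$ is an eigenvector with eigenvalue $2$ for $A_t$, giving $\|A_t\|_p\geq 2$, while the row and column sums are both $2$, so $\|A_t\|_1=\|A_t\|_\infty=2$ and Riesz--Thorin gives $\|A_t\|_p\leq 2$. Thus the function $p\mapsto\|(1,t)\|_{F^p(\Z_2)}$ is constant, not strictly monotone, and cannot separate different H\"older exponents. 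Your proposed strictness criterion (``the extremal vector is not an eigenvector'') fails here precisely because $(1,1)$ is simultaneously an eigenvector of $A_t$ and an $\ell^p$-norm attaining vector for all $p$.

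The underlying error is the claim that the $F^p(\Z_2)$-norm depends only on $|\xi_0|,|\xi_1|$. This is false: the diagonal-isometry trick from \autoref{prop: shift invariance norm on FpZn} only absorbs the \emph{specific} phase vector $\omega=(1,\omega_n,\ldots)$, not arbitrary phases, because $u\,\mathrm{d}(e^{i\theta})\,u^{-1}$ is not an $\ell^p$-isometry for generic $\theta$ when $p\neq 2$. The paper exploits exactly this phenomenon by choosing the \emph{complex} vector $(1,i)$, for which the associated matrix $a=\tfrac{1}{2}\left(\begin{smallmatrix}1+i&1-i\\1-i&1+i\end{smallmatrix}\right)$ is unitary but not doubly substochastic in modulus; one then gets the closed form $\|(1,i)\|_{F^r(\Z_2)}=2^{|1/r-1/2|}$ via Riesz--Thorin between $r=1$ and $r=2$ for the upper bound and evaluation at $(1,0)$ for the lower bound. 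This explicit formula immediately yields the result without any appeal to strict interpolation.
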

\begin{proof}
The ``if'' implication follows from Proposition 2.17 in \cite{GarThie14pre:LpGpAlg}.
We proceed to show the ``only if'' implication.

Given $r$ in $[1,\infty)$, we claim that
\[\|(1,i)\|_{F^r(\Z_2)}=2^{\left|\frac{1}{r}-\frac{1}{2}\right|}.\]

By Proposition~2.17 in \cite{GarThie14pre:LpGpAlg}, the quantity on the left-hand side remains unchanged if one replaces $r$
with its conjugate exponent. Since the same holds for the quantity on the right-hand side, it follows that it is
enough to prove the claim for $r$ in $[1,2]$.\\
\indent  Define a continuous function $\gamma\colon[1,2]\to \R$ by $\gamma(r)=\|(1,i)\|_{F^r(\Z_2)}$ for $r$ in $[1,2]$.
Let $a$ be the matrix
\[
a=\frac{1}{2} \begin{pmatrix}
1+i & 1-i \\
1-i & 1+i
\end{pmatrix}.
\]
Then $\gamma(r)= \left\| a \right\|_r$ for all $r\in [1,2]$.
The values of $\gamma$ at $r=1$ and $r=2$ are easy to compute, and we have $\gamma(1)=\|a\|_1=2^{\frac{1}{2}}$ and $\gamma(2)=\|a\|_2=1$.
Fix $r\in [1,2]$ and let $\theta$ in $(0,1)$ satisfy
$$\frac{1}{r}=\frac{1-\theta}{1} + \frac{\theta}{2}.$$
Using the Riesz-Thorin Interpolation Theorem between $r_0=1$ and $r_1=2$, we conclude that
\[
\gamma(r) \leq\gamma(1)^{1-\theta}\cdot\gamma(2)^{\theta}=2^{\frac{1}{2}(\frac{2}{r}-1)}\cdot 1= 2^{\frac{1}{r}-\frac{1}{2}}.\]

For the converse inequality, fix $r$ in $[1,2]$ and consider the vector $x=\left(\begin{smallmatrix}1 \\ 0 \end{smallmatrix}\right)$ in
$\ell^r_2$. Then $\|x\|_r=1$ and
$ax = \frac{1}{2}(\begin{smallmatrix}
1+i \\ 1-i
\end{smallmatrix})$.
We compute:
\[
\left\|\frac{1}{2}\begin{pmatrix}
1+i \\ 1-i
\end{pmatrix}\right\|_r
= \frac{1}{2} ( |1+i|^r + |1-i|^r )^{\frac{1}{r}}
= 2^{(\frac{1}{r}-\frac{1}{2})}.
\]
We conclude that
\[
\gamma(r)
=\left\| a \right\|_r
\geq \frac{\left\| ax\right\|_r}{\|x\|_r}
= 2^{(\frac{1}{r}-\frac{1}{2})}.
\]
This shows that $\gamma(r)=2^{(\frac{1}{r}-\frac{1}{2})}$ for $r\in[1,2]$, and the claim follows. \\
\ \\
\indent Now let $p$ and $q$ be in $[1,\infty)$ and let $\varphi\colon F^p(\Z_2)\to F^{q}(\Z_2)$ be an isometric isomorphism.
Since $\varphi$ is an algebra isomorphism, we must have either $\varphi(x,y)=(x,y)$ or $\varphi(x,y)=(y,x)$ for all $(x,y)\in\C^2$.
By \autoref{prop: shift invariance norm on FpZn}, the flip $(x,y)\mapsto (y,x)$ is an isometric isomorphism of
$F^{q}(\Z_2)$, so we may assume that $\varphi$ is the identity map on $\C^2$. It follows that $\|(1,i)\|_{F^p(\Z_2)}
= \|(1,i)\|_{F^{q}(\Z_2)}$, so $\left|\frac{1}{p}-\frac{1}{2}\right| = \left|\frac{1}{q}-\frac{1}{2}\right|$.
We conclude that either
$p=q$ or $\frac{1}{p}+\frac{1}{q}=1$, so the proof is complete.
\end{proof}

We are now ready to show that for $p$ and $q$ in $[1,\infty)$, the algebras $F^p(\Z)$
and $F^{q}(\Z)$ are (abstractly) isometrically isomorphic only in the trivial cases $p=q$ and $\frac{1}{p}+\frac{1}{q}=1$. (Compare this with part (2) of Corollary~3.20 in \cite{GarThie14pre:LpGpAlg},
where only the canonical homomorphism is considered.)

\begin{thm}\label{thm: FpZ not isom}
Let $p$ and $q$ be in $[1,\infty)$.
Then $F^p(\Z)$ is isometrically isomorphic to $F^q(\Z)$ if and only if either $p=q$ or $\frac{1}{q}+\frac{1}{q}=1$.
\end{thm}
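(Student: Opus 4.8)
The plan is to reduce the question to the group $\Z_2$, using \autoref{thm:amenableKernel} together with the fact that the Gelfand spectrum of $F^p(\Z)$ is the circle, and then to quote \autoref{prop: FpZ2 not isom}. Throughout we use that, since $\Z$ is amenable, $F^p(\Z)=F^p_\lambda(\Z)$ (this is \autoref{thm:AmenTFAE} for $p\in(1,\infty)$; for $p=1$ both algebras are $L^1(\Z)$), so we write $F^p_\lambda(\Z)$ from now on. The ``if'' implication is easy: if $p=q$ there is nothing to prove, and if $\frac1p+\frac1q=1$ then the group automorphism $n\mapsto-n$ of $\Z$ induces an algebra automorphism $f\mapsto\check f$ of $\C[\Z]$ whose extension is isometric, because under the identification $\ell^p(\Z)^*=\ell^{p'}(\Z)$ the Banach-space transpose of $\lambda_p(f)$ is $\lambda_{p'}(\check f)$, so that $\|\check f\|_{F^q_\lambda(\Z)}=\|\lambda_p(f)\|_{\B(\ell^p(\Z))}=\|f\|_{F^p_\lambda(\Z)}$; compare Corollary~3.20 in \cite{GarThie14pre:LpGpAlg}. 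The work is all in the ``only if'' implication.

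First I would record two structural facts. By Proposition~3.13 in \cite{GarThie14pre:LpGpAlg}, $F^p_\lambda(\Z)$ is a unital commutative Banach algebra whose Gelfand spectrum is homeomorphic to $S^1$, with $z\in S^1$ corresponding to the character $\chi^p_z$ sending the canonical unitary $u$ to $z$. Secondly, for each $w\in S^1$ the assignment $u\mapsto wu$ extends to an isometric automorphism $\mu_w$ of $F^p_\lambda(\Z)$: conjugation by the invertible isometry of $\ell^p(\Z)$ sending $\delta_n$ to $w^n\delta_n$ carries $\lambda_p(u)$ to $w\lambda_p(u)$. On the spectrum $\mu_w$ induces the rotation $z\mapsto wz$, so these automorphisms act transitively on $S^1$.

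Now let $\varphi\colon F^p_\lambda(\Z)\to F^q_\lambda(\Z)$ be an isometric isomorphism; it induces a homeomorphism $h\colon S^1\to S^1$ of the spectra, characterized by $\chi^q_z\circ\varphi=\chi^p_{h(z)}$. The key topological input is that $h$ carries some pair of antipodal points to a pair of antipodal points: lifting $h$ to $\tilde h\colon\R\to\R$, the continuous function $g(x)=\tilde h(x+\tfrac12)-\tilde h(x)$ satisfies $g(x)+g(x+\tfrac12)=\deg(h)\in\{\pm1\}$, so $x\mapsto g(x)-\tfrac12\deg(h)$ changes sign between $x$ and $x+\tfrac12$ and hence vanishes somewhere by the intermediate value theorem; thus $g$ attains a value in $\tfrac12+\Z$, which says precisely that $h$ sends some antipodal pair $\{z_0,-z_0\}$ to $\{h(z_0),-h(z_0)\}$. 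Replacing $\varphi$ by $\mu_{h(z_0)^{-1}}\circ\varphi\circ\mu_{z_0}$ (the first factor acting on $F^q_\lambda(\Z)$, the last on $F^p_\lambda(\Z)$), which is again an isometric isomorphism, we may assume $h(1)=1$ and $h(-1)=-1$, i.e.\ $\chi^q_1\circ\varphi=\chi^p_1$ and $\chi^q_{-1}\circ\varphi=\chi^p_{-1}$.

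Finally I would descend to $\Z_2$. Applying \autoref{thm:amenableKernel} with $G=\Z$, $N=2\Z$, $G/N\cong\Z_2$, the quotient map induces a quotient homomorphism $F^p_\lambda(\Z)\to F^p_\lambda(\Z_2)$, hence an isometric isomorphism $F^p_\lambda(\Z)/J^p\cong F^p_\lambda(\Z_2)$ where $J^p$ is its kernel; since $F^p_\lambda(\Z_2)\cong\C^2$ is separated by the two characters $v\mapsto\pm1$, and these pull back to $\chi^p_1$ and $\chi^p_{-1}$, we get $J^p=\ker\chi^p_1\cap\ker\chi^p_{-1}$, and similarly for $q$. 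Since $\chi^q_{\pm1}\circ\varphi=\chi^p_{\pm1}$ we have $\varphi(J^p)\subseteq J^q$, and applying the same to $\varphi^{-1}$ gives $\varphi(J^p)=J^q$; so $\varphi$ descends to an isometric isomorphism $F^p_\lambda(\Z)/J^p\to F^q_\lambda(\Z)/J^q$, i.e.\ an isometric isomorphism $F^p_\lambda(\Z_2)\to F^q_\lambda(\Z_2)$, and \autoref{prop: FpZ2 not isom} forces $p=q$ or $\frac1p+\frac1q=1$. The main obstacle here is this descent step, which genuinely needs \autoref{thm:amenableKernel} in its full strength: one must know that the natural map $F^p_\lambda(\Z)\to F^p_\lambda(\Z_2)$ is an \emph{isometric} quotient map, since otherwise one only learns that $F^p_\lambda(\Z_2)$ and $F^q_\lambda(\Z_2)$ are isomorphic as topological algebras, which is not enough; the precise identification of the spectrum with $S^1$ and the antipodal-points lemma (which compensates for the modulation automorphisms only moving one point of $S^1$ at a time) are the other essential ingredients.
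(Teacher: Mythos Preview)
Your proof is correct and follows essentially the same route as the paper: identify the spectrum with $S^1$, use the antipodal-pair property of circle homeomorphisms, normalize via the rotation automorphisms, and then descend to $\Z_2$ via \autoref{thm:amenableKernel} to invoke \autoref{prop: FpZ2 not isom}. You supply more detail than the paper in a couple of places (the intermediate-value argument for antipodal points and the explicit description $J^p=\ker\chi^p_1\cap\ker\chi^p_{-1}$ of the kernel), and your ``if'' direction is argued directly rather than by citation, but the architecture is the same.
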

\begin{proof}
The ``if'' implication follows from Proposition 2.17 in \cite{GarThie14pre:LpGpAlg}. Let us show the converse.

Recall that the maximal ideal spaces of $F^p(\Z)$ and $F^q(\Z)$ are canonically homeomorphic to $S^1$ by
Proposition~3.13 in \cite{GarThie14pre:LpGpAlg}.
We let $\Gamma_p\colon F^p(\Z)\to C(S^1)$ denote the Gelfand transform, which sends the generator $u\in F^p(\Z)$ to the canonical inclusion $\iota$ of $S^1$ into $\C$.

Let $\varphi\colon F^p(\Z)\to F^q(\Z)$ be an isometric isomorphism.
Then $\varphi$ induces a homeomorphism $f\colon S^1\to S^1$ that maps $z$ in $S^1$ to the unique point $f(z)$ in $S^1$
that satisfies
$$\ev_z\circ\varphi = \ev_{f(z)} \colon F^p(\Z)\to\C.$$
It is a classical result in point-set topology that there must exist $\zeta$ in $S^1$ such that $f(-\zeta)=-f(\zeta)$.
Denote by $\pi_p\colon F^p(\Z)\to F^p(\Z_2)$ and $\pi_q\colon F^p(\Z)\to F^q(\Z_2)$ the canonical homomorphisms associated with
the surjective map $\Z\to \Z_2$.
Then $\pi_p$ and $\pi_q$ are quotient maps by \autoref{thm:amenableKernel}.
Let $\omega_\zeta\colon F^p(\Z)\to F^p(\Z)$ be the isometric isomorphism induced by multiplying by $\zeta$ the canonical
generator in $F^p(\Z)$ corresponding to $1\in\Z$. Analogously, let $\omega_{f(\zeta)}\colon F^q(\Z)\to F^q(\Z)$ be the
isometric isomorphism induced by multiplying by $f(\zeta)$ the canonical generator in $F^q(\Z)$.
Then the following diagram is commutative:

\begin{center}
\makebox{
\xymatrix{ & C(S^1) \ar[r]^-{f^*} & C(S^1) & \\
F^p(\Z) \ar[d]_{\pi_p}  & F^p(\Z)\ar[l]_-{\omega_\zeta} \ar[r]_-{\varphi} \ar[u]^-{\Gamma_p}
& F^q(\Z) \ar[u]_-{\Gamma_q} \ar[r]^-{\omega_{f(\zeta)}} & F^q(\Z) \ar[d]^{\pi_q} \\
F^p(\Z_2)\ar@{-->}_-{\widehat{\psi}}[rrr] & & &  F^q(\Z_2).
}}
\end{center}

Define a homomorphism $\psi\colon F^p(\Z)\to F^q(\Z)$ by
$$\psi=\omega_{f(\zeta)}\circ\varphi\circ\omega_\zeta^{-1}.$$
Then $\psi$ is an isometric isomorphism. One checks that $\psi$ maps the kernel of $\pi_p$ onto the kernel of $\pi_q$.
It follows that $\psi$ induces an isometric isomorphism $\widehat{\psi}\colon F^p(\Z_2)\to F^q(\Z_2)$.
By \autoref{prop: FpZ2 not isom}, this implies that $p$ and $q$ are either equal or conjugate, as desired.
\end{proof}

%\bibliographystyle{amsalphaMy}
%\bibliographystyle{aomalphaMy}
%\bibliography{ReferencesMR}

\begin{thebibliography}{GT14b}

\bibitem[BO08]{BroOza08Book}
\bgroup\scshape{}N.~P. Brown\egroup{} and \bgroup\scshape{}N.~Ozawa\egroup{},
  \emph{\ca{s} and finite-dimensional approximations}, \emph{Graduate Studies
  in Mathematics} \textbf{88}, American Mathematical Society, Providence, RI,
  2008. \mr{2391387 (2009h:46101)}.  \zbl{1160.46001}.

\bibitem[Der11]{Der11ConvOps}
\bgroup\scshape{}A.~Derighetti\egroup{}, \emph{Convolution operators on
  groups}, \emph{Lecture Notes of the Unione Matematica Italiana} \textbf{11},
  Springer, Heidelberg; UMI, Bologna, 2011. \mr{2809956 (2012e:43001)}.
  \zbl{1233.43001}.  \doi{10.1007/978-3-642-20656-6}.

\bibitem[GT14a]{GarThi14arX:LpGenInvIsom}
\bgroup\scshape{}E.~Gardella\egroup{} and \bgroup\scshape{}H.~Thiel\egroup{},
  Banach algebras generated by an invertible isometry of an $L^p$-space,
  preprint (arXiv:1405.5589 [math.FA]), 2014.

\bibitem[GT14b]{GarThie14pre:LpGpAlg}
\bgroup\scshape{}E.~Gardella\egroup{} and \bgroup\scshape{}H.~Thiel\egroup{},
  Group algebras acting on $L^p$-spaces, in preparation, 2014.

\bibitem[Her73]{Her73SynthSubgps}
\bgroup\scshape{}C.~Herz\egroup{}, Harmonic synthesis for subgroups,
  \emph{Ann. Inst. Fourier (Grenoble)} \textbf{23} (1973), 91--123. \mr{0355482
  (50 \#7956)}.  \zbl{0257.43007}.

\bibitem[NR09]{NeuRun09ColumnRowQSL}
\bgroup\scshape{}M.~Neufang\egroup{} and \bgroup\scshape{}V.~Runde\egroup{},
  Column and row operator spaces over {${\rm QSL}\sb p$}-spaces and their use
  in abstract harmonic analysis,  \emph{J. Math. Anal. Appl.} \textbf{349}
  (2009), 21--29. \mr{2455727 (2009i:46105)}.  \zbl{1155.43003}.
  \doi{10.1016/j.jmaa.2008.08.021}.

\bibitem[Pat88]{Pat88Amen}
\bgroup\scshape{}A.~L.~T. Paterson\egroup{}, \emph{Amenability},
  \emph{Mathematical Surveys and Monographs} \textbf{29}, American Mathematical
  Society, Providence, RI, 1988. \mr{961261 (90e:43001)}.  \zbl{0648.43001}.
  \doi{10.1090/surv/029}.

\bibitem[Phi13]{Phi13arX:LpCrProd}
\bgroup\scshape{}N.~C. Phillips\egroup{}, Crossed products of $l^p$ operator
  algebras and the k-theory of cuntz algebras on $L^p$ spaces, preprint
  (arXiv:1309.6406 [math.FA]), 2013.

\bibitem[PH14]{PoyHej14arX:SimpleLp}
\bgroup\scshape{}S.~Pooya\egroup{} and \bgroup\scshape{}S.~Hejazian\egroup{},
  Simple reduced $L^p$ operator crossed products with unique trace, preprint
  (arXiv:1402.3233 [math.FA]), 2014.

\bibitem[RS00]{ReiSte00HarmAna}
\bgroup\scshape{}H.~Reiter\egroup{} and \bgroup\scshape{}J.~D.
  Stegeman\egroup{}, \emph{Classical harmonic analysis and locally compact
  groups}, second ed., \emph{London Mathematical Society Monographs. New
  Series} \textbf{22}, The Clarendon Press, Oxford University Press, New York,
  2000. \mr{1802924 (2002d:43005)}.  \zbl{0965.43001}.

\bibitem[Run05]{Run05ReprQSL}
\bgroup\scshape{}V.~Runde\egroup{}, Representations of locally compact groups
  on {${\rm QSL}\sb p$}-spaces and a {$p$}-analog of the {F}ourier-{S}tieltjes
  algebra,  \emph{Pacific J. Math.} \textbf{221} (2005), 379--397. \mr{2196641
  (2007j:22012)}.  \zbl{1095.43001}.  \doi{10.2140/pjm.2005.221.379}.

\end{thebibliography}

\providecommand{\bysame}{\leavevmode\hbox to3em{\hrulefill}\thinspace}
\providecommand{\noopsort}[1]{}
\providecommand{\mr}[1]{\href{http://www.ams.org/mathscinet-getitem?mr=#1}{MR~#1}}
\providecommand{\zbl}[1]{\href{http://www.zentralblatt-math.org/zmath/en/search/?q=an:#1}{Zbl~#1}}
\providecommand{\jfm}[1]{\href{http://www.emis.de/cgi-bin/JFM-item?#1}{JFM~#1}}
\providecommand{\arxiv}[1]{\href{http://www.arxiv.org/abs/#1}{arXiv~#1}}
\providecommand{\doi}[1]{\url{http://dx.doi.org/#1}}
\providecommand{\MR}{\relax\ifhmode\unskip\space\fi MR }
% \MRhref is called by the amsart/book/proc definition of \MR.
\providecommand{\MRhref}[2]{%
  \href{http://www.ams.org/mathscinet-getitem?mr=#1}{#2}
}
\providecommand{\href}[2]{#2}

\end{document}